\documentclass[letterpaper,10 pt, conference]{ieeeconf}
\IEEEoverridecommandlockouts

\usepackage{amsmath, amssymb}
\usepackage{mathtools}
\usepackage{pgfplots}
\usepackage{tikz}


\DeclareMathOperator{\im}{im}

\usetikzlibrary{arrows,automata, patterns, calc,decorations.pathmorphing,decorations.markings}

\newcommand{\bbR}{\mathbb{R}}

\newcommand{\calD}{\mathcal{D}}

\newcommand{\calR}{\mathcal{R}}
\newcommand{\calX}{\mathcal{X}}
\newcommand{\calV}{\mathcal{V}}

\newcommand{\calY}{\mathcal{Y}}
\newcommand{\calS}{\mathcal{S}}
\newcommand{\calU}{\mathcal{U}}
\newcommand{\calW}{\mathcal{W}}

\newcommand{\btab}{\begin{center}\def\arraystretch{1.5}\begin{tabular}}
		\newcommand{\etab}{\end{tabular}\end{center}}
\newcommand{\bbm}{\begin{bmatrix*}}
	\newcommand{\ebm}{\end{bmatrix*}}
\newcommand{\bvm}{\begin{vmatrix*}}
	\newcommand{\evm}{\end{vmatrix*}}

\newcommand{\set}[2]{\left\{ #1 ~\left|~ \vphantom{#1} #2 \right. \right\}}

\newcommand{\qand}{\quad\text{and}\quad}


\newcommand{\sys}{{\Sigma}}
\newcommand{\ass}{\text{\upshape A}}
\newcommand{\env}{\text{\upshape E}}
\newcommand{\gar}{{\Gamma}}
\newcommand{\con}{\mathcal{C}}
\newcommand{\contract}[1]{\mbox{\ensuremath{\con_{#1} = (\ass_{#1}, \gar_{#1})}}}

\newcommand{\simby}{\preccurlyeq}
\newcommand{\bisim}{\sim}

\newcommand{\meet}{\wedge}

\newcommand{\X}[1]{\calX_{#1}}
\newcommand{\U}[1]{\calU_{#1}}
\newcommand{\Y}[1]{\calY_{#1}}
\newcommand{\D}[1]{\calD_{#1}}
\newcommand{\W}[1]{\calW_{#1}}

\renewcommand{\epsilon}{\varepsilon}

\newtheorem{theorem}{Theorem}
\newtheorem{lemma}[theorem]{Lemma}
\newtheorem{proposition}[theorem]{Proposition}
\newtheorem{remark}{Remark}
\newtheorem{definition}{Definition}
\newtheorem{example}{Example}

\title{Series composition of simulation-based assume-guarantee contracts\\ for linear dynamical systems}
\author{B. M. Shali, H. M. Heidema, A. J. van der Schaft, B. Besselink\thanks{The authors are with the Jan C. Willems Center for Systems and Control, and the Bernoulli Institute for Mathematics, Computer Science, and Artificial Intelligence, University of Groningen, Groningen, The Netherlands; Email: {\emph{b.m.shali@rug.nl}}; {\emph{mariekeheidema@live.nl}}; \emph{a.j.van.der.schaft@rug.nl}; \emph{b.besselink@rug.nl}.}}

\begin{document}
	\maketitle
	\begin{abstract}
		We present assume-guarantee contracts for continuous-time linear dynamical systems with inputs and outputs. These contracts are used to express specifications on the dynamic behaviour of a system. Contrary to existing approaches, we use simulation to compare the dynamic behaviour of two systems. This has the advantage of being supported by efficient numerical algorithms for verification as well as being related to the rich literature on (bi)simulation based techniques for verification and control, such as those based on (discrete) abstractions. Using simulation, we define contract implementation and a notion of contract refinement. We also define a notion of series composition for contracts, which allows us to reason about the series interconnection of systems on the basis of the contracts on its components. Together, the notions of refinement and composition allow contracts to be used for modular design and analysis of interconnected systems.
	\end{abstract}
    \section{Introduction}

	Contract-based design has proven to be an effective method for modular design and analysis of complex interconnected systems \cite{benveniste2018, vincentelli2012, nuzzo2014}. Motivated by this, we present assume-guarantee contracts for continuous-time linear dynamical systems with inputs and outputs in the spirit of \cite{shali2021, shali2021b}. These contracts are defined as a pair of systems called assumptions and guarantees. The assumptions capture the available information about the dynamic behaviour of the environment in which the system is supposed to operate, while the guarantees specify the desired dynamic behaviour of the system when interconnected with a compatible environment. In contrast to \cite{shali2021, shali2021b}, we formalize this with the notion of \emph{simulation}, which is used to compare the dynamic behaviour of two systems.

	Simulation is the one-sided version of the notion of bisimulation, which is used to express (external) system equivalence. Bisimulation finds its origins in the field of computer science, where it was introduced in the context of concurrent processes \cite{milner1995}. In this paper, we adopt the notion of (bi)simulation for continuous-time linear dynamical system introduced in \cite{vanderschaft2004}, see also \cite{vanderschaft2004b, megawati2016}. This notion is very much inspired by the work of Pappas et al. in \cite{pappas2000}, \cite{pappas2002}, \cite{pappas2003}, where the focus is on abstractions, i.e., (bi)similar systems of lower state space dimension.

    Using (bi)simulation as a means of comparing system behaviour has the following advantages. First, as shown in \cite{vanderschaft2004}, \cite{vanderschaft2004b, megawati2016}, efficient numerical procedures for verifying (bi)simulation can be obtained using ideas from geometric control theory \cite{basile1992} and, in particular, the invariant subspace algorithm \cite{trentelman2001}. Second, this connection with geometric control theory allows us to use a multitude of tools in addressing problems relevant to contract-based design, such as constructing  implementations and controllers for implementations. Third, the notion of (bi)simulation has been extended to more general system classes, such as hybrid and transition systems. In fact, there is a rich literature on using discrete abstractions of continuous dynamical systems for the purposes of verification and control \cite{tabuada2009, belta2017}. There are also alternative notions of (bi)simulation, such as approximate (bi)simulation \cite{girard2007}, \cite{girard2011}, and asymptotic (bi)simulation \cite{vinjamoor2010b}. All things considered, using (bi)simulation as a means of comparing system behaviour opens the doors to the vast literature on related research.

    The contributions of this paper are as follows. First, we define contracts and characterize contract implementation as a simulation of one system by another. Second, we define contract refinement, again in terms of simulation, and show that it satisfies properties which allow us to determine if a given contract expresses a stricter specification than another contract. Since simulation can be verified using efficient numerical procedures, it follows that the same procedures can be used to verify contract implementation and refinement. Third, we define the series composition of contracts and show that it satisfies properties which allow us to reason about the series interconnection of two systems on the basis of the contracts that they implement. Together, contract refinement and the series composition of contracts have properties which enable the independent design of components within interconnected systems.

    The contracts in this paper draw inspiration from the contracts introduced in \cite{shali2021, shali2021b}. The main difference with \cite{shali2021,shali2021b}, is that here we use simulation instead of inclusion of external behaviour as a means of comparing system behaviour. As mentioned before, this enables the use of efficient computational tools that are not available for the contracts in \cite{shali2021,shali2021b}. Furthermore, as noticed in the theory of concurrent processes, simulation is more powerful than behavioural inclusion for nondeterministic systems, which will be used throughout this paper.

    Different types of contracts have already been used as specifications for dynamical systems. For example, parametric assume-guarantee contracts are introduced in \cite{kim2017} and used for control synthesis in \cite{khatib2020, chen2021}, while assume-guarantee contracts that can capture invariance are presented in \cite{saoud2021b} and applied in \cite{zonetti2019, loreto2020}. Related work on contracts can also be found in \cite{eqtami2019, ghasemi2020, sharf2021}. Whereas the contracts in this paper express specifications on the dynamics of continuous-time systems, the contracts in \cite{kim2017, khatib2020, chen2021, sharf2021} are defined only for discrete-time systems, and the contracts in \cite{zonetti2019, loreto2020, eqtami2019} cannot express specifications on dynamics. In this respect, the contracts in this paper are most closely related to the contracts in \cite{besselink2019}, while \cite{kerber2009, kerber2010, kerber2011} contain closely related work on compositional reasoning. A key difference with \cite{besselink2019}, however, is that there is no distinction between inputs and outputs and interconnection is defined through variable sharing.

    The remainder of this paper is organized as follows. In Section~\ref{sec:system_class_simulation}, we introduce the classes of systems considered in this paper and develop an appropriate notion of simulation. Then contracts, contract implementation and contract refinement are defined and characterized in Section~\ref{sec:contracts}. Following this, we define and characterize the series composition of contracts in Section~\ref{sec:series_composition}. We finish with concluding remarks in Section~\ref{sec:conclusion}.

    The notation used in this paper is mostly standard. The set of nonnegative real numbers is denoted by $\bbR_{\geq0}$. Finite-dimensional linear (sub)spaces are denoted by capital calligraphic letters. Given a linear subspace $\calV\subset\calX\times\calY$, $\pi_{\calX}(\calV)$ denotes the projection of $\calV$ onto $\calX$, i.e.,
    \begin{equation}
        \pi_{\calX}(\calV) = \set{x\in\calX}{\exists y\in\calY \text{ s.t.\ } (x,y)\in\calV}.
    \end{equation}
    The projection $\pi_{\calY}(\calV)$ is defined similarly. Given a linear map $A:\calX\to\calY$, $\im A$ and $\ker A$ denote the image and kernel of $A$, respectively.

    \section{System classes and simulation}\label{sec:system_class_simulation}
	In this paper, we consider systems of the form
	\begin{equation}\label{eq:sys}
		\sys: \left\lbrace
		\begin{aligned}
			\dot x(t) &= Ax(t) + Bu(t) + Gd(t),\\
			y(t) &= Cx(t),
		\end{aligned}\right.
	\end{equation}
	with state $x(t)\in\X{}$, input $u(t)\in\U{}$, output $y(t)\in\Y{}$, and driving variable $d(t)\in\D{}$. The driving variable $d$ can be used to model disturbance, nondeterminism, unknown inputs, or lack of knowledge about the dynamics of $\sys$. We treat $\sys$ as an open system in which the input $u$ and output $y$ are external variables that interact with the environment, whereas the state $x$ and the driving variable $d$ are internal and do not interact with the environment. As a design goal, we are interested in expressing specifications on the dynamics of the external variables of $\sys$. We will do this with the notion of a contract.

    To define contracts and express specifications, we will make use of systems of the form
	\begin{equation}\label{eq:xi_sys}
		\Xi_i:\left\lbrace
		\begin{aligned}
			\dot x_i(t) &= A_ix(t) + G_id_i(t),\\
			w_i(t) &= C_ix_i(t),\\
			0 &= H_ix_i(t),
		\end{aligned}\right.
	\end{equation}
	with state $x_i(t)\in\X{i}$, output $w(t)\in\W{i}$, and driving variable $d(t)\in\D{i}$.
	The main differences between $\sys$ and $\Xi_i$ is that $\Xi_i$ does not admit an input and includes algebraic constraints. Due to the algebraic constraints, not all initial states lead to feasible trajectories. This motivates the introduction of the \emph{consistent subspace} $\calV_i\subset\X{i}$, defined as the set of initial states $x_i(0)$ for which there exists a driving variable $d_i:\bbR_{\geq0}\to\D{i}$ such that the resulting state trajectory satisfies the algebraic constraint, i.e., $H_ix_i(t) = 0$ for all $t\geq 0$. It can be shown  that $\calV_i$ is the \emph{largest} subspace that satisfies
	\begin{equation}\label{eq:consistent_subspace}
		A_i\calV_i \subset \calV_i + \im G_i \qand \calV_i\subset\ker H_i.
		\vspace{-1mm}
	\end{equation}
	\begin{remark}
		Including algebraic constraints in the systems $\Xi_i$ has two advantages. First, it leads to a more general class of systems, which will allow us to express more general specifications. Second, it allows for easily defining certain interconnections, which will be essential in the definitions of contract refinement and the series composition of contracts.
	\end{remark}

	The theory that we will develop heavily relies on comparing the dynamics of different systems $\Xi_i$. For this, we will make use of the notion of simulation, which itself relies on the notion of simulation relation. The following definition is taken from \cite{besselink2019}, see also \cite{vanderschaft2004b, megawati2016}.

	\begin{definition}
		A linear subspace $\calS\subset\X{1}\times\X{2}$ satisfying $\pi_{\X{i}}(\calS) \subset \calV_i$, $i\in\{1,2\}$, is a \emph{simulation relation} of $\Xi_1$ by $\Xi_2$ if the following implication holds: for all $(x_1(0), x_2(0))\in\calS$ and all $d_1:\bbR_{\geq0}\to\D{1}$ such that $x_1(t)\in\calV_1$ for all $t\geq 0$, there exists $d_2:\bbR_{\geq 0}\to\D{2}$ such that:
		\begin{enumerate}
			\item $\left(x_1(t), x_2(t)\right)\in \calS$ for all $t\geq 0$;
			\item $w_1(t) = w_2(t)$ for all $t\geq0$.
		\end{enumerate}
	\end{definition}

	Using ideas from geometric control theory \cite{basile1992, trentelman2001}, we obtain the following equivalent characterization of a simulation relation based solely on the system matrices, see \cite{besselink2019, vanderschaft2004b, megawati2016} for details.
	\begin{proposition}\label{prop:simulation_relation}
		A linear subspace $\calS\subset\X{1}\times\X{2}$ satisfying $\pi_{\X{i}}(\calS) \subset \calV_i$, $i\in\{1,2\}$, is a simulation relation of $\Xi_1$ by $\Xi_2$ if and only if for all $(x_1,x_2)\in\calS$ and all $d_1\in\D{1}$ such that $A_1x_1 + G_1d_1\in\calV_1$, there exists $d_2\in\D{2}$ such that:
		\begin{enumerate}
			\item $(A_1x_1 + G_1d_1, A_2x_2 + G_2d_2)\in\calS$;
			\item $C_1x_1 = C_2x_2$.
		\end{enumerate}
	\end{proposition}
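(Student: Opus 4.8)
The plan is to prove the two implications of Proposition~\ref{prop:simulation_relation} by relating trajectory-level statements (from the definition of a simulation relation) to their infinitesimal counterparts (the pointwise conditions in the proposition). The bridge in both directions is a standard fact about linear differential systems: for a subspace $\calV_i$ satisfying \eqref{eq:consistent_subspace}, a trajectory $x_i(\cdot)$ generated by some driving variable $d_i(\cdot)$ stays in $\calV_i$ for all $t\geq 0$ if and only if $x_i(0)\in\calV_i$ and $\dot x_i(t) = A_ix_i(t) + G_id_i(t)\in\calV_i$ for all $t$ — intuitively, $\calV_i$ is $(A_i,G_i)$-invariant, so once inside it one can always choose $d_i$ to keep the velocity tangent to $\calV_i$. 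This is exactly why $\calV_i$ is characterized by \eqref{eq:consistent_subspace}.

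For the ``only if'' direction, I would fix $(x_1,x_2)\in\calS$ and $d_1\in\D{1}$ with $A_1x_1 + G_1d_1\in\calV_1$. I would then construct an actual trajectory: since $\calS$ is a simulation relation, $x_1\in\pi_{\X{1}}(\calS)\subset\calV_1$, and by the invariance fact there is a driving variable $d_1(\cdot)$ with $d_1(0)=d_1$ producing a trajectory $x_1(\cdot)$ with $x_1(0)=x_1$ that remains in $\calV_1$. Apply the definition to obtain a matching $d_2(\cdot)$ and trajectory $x_2(\cdot)$ with $x_2(0)=x_2$, $(x_1(t),x_2(t))\in\calS$ and $w_1(t)=w_2(t)$ for all $t$. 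Setting $d_2 := d_2(0)$, differentiating $(x_1(t),x_2(t))\in\calS$ at $t=0$ (valid since $\calS$ is a closed subspace) gives condition~(1), and evaluating $C_1x_1(t)=C_2x_2(t)$ at $t=0$ gives condition~(2). A technical point to handle carefully here is the regularity of the driving variables: one wants $d_2(\cdot)$, hence $x_2(\cdot)$, differentiable at $t=0$, which the geometric control construction of the matching input provides (e.g.\ one can take $d_i(\cdot)$ smooth, or argue via right-derivatives).

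For the ``if'' direction, I would assume the pointwise conditions and, given $(x_1(0),x_2(0))\in\calS$ and $d_1(\cdot)$ keeping $x_1(\cdot)$ in $\calV_1$, construct $d_2(\cdot)$ and verify conditions~(1)--(2) of the definition. The natural approach is to define $x_2(\cdot)$ and $d_2(\cdot)$ together so that $(x_1(t),x_2(t))$ traces a curve in $\calS$: at each point of the curve, the invariance hypothesis on $x_1$ gives $A_1x_1(t)+G_1d_1(t)\in\calV_1$, so the proposition's hypothesis supplies a $d_2(t)$ with $(A_1x_1(t)+G_1d_1(t),\,A_2x_2(t)+G_2d_2(t))\in\calS$ and $C_1x_1(t)=C_2x_2(t)$. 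Choosing the selection $d_2(t)$ to depend linearly (measurably) on the data — which is possible because all the constraints are linear — yields a well-posed linear ODE for $x_2(\cdot)$ whose solution stays in $\pi_{\X{2}}(\calS)\subset\calV_2$, establishing~(1), while~(2) is immediate.

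The step I expect to be the main obstacle is the ``if'' direction's construction of a globally valid $d_2(\cdot)$: one must turn the pointwise existence statement into a single driving variable defined on all of $\bbR_{\geq0}$ such that the resulting $x_2(\cdot)$ stays in $\calS$. This requires care in selecting $d_2(t)$ measurably/continuously in $t$ and in checking that the candidate $x_2(\cdot)$ really does remain in the subspace where the hypothesis applies — essentially an invariance argument for $\calS$ under the coupled dynamics, analogous to the one for $\calV_i$. Since both $\calS$ and the constraint sets are linear, a linear-algebraic selection (choosing a complement and a particular solution depending linearly on $x_1(t),x_2(t),d_1(t)$) makes this routine once set up; but it is where all the real content of the equivalence resides, and it is also where the reference to \cite{besselink2019, vanderschaft2004b, megawati2016} does the heavy lifting that I would either cite or reproduce in this form.
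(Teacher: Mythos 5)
The paper contains no proof of Proposition~\ref{prop:simulation_relation} --- it is stated as a known result with the proof deferred to \cite{besselink2019, vanderschaft2004b, megawati2016} --- so there is no in-paper argument to compare against; your plan reproduces the standard argument from those references (for necessity, realize a trajectory with prescribed initial velocity via a friend of $\calV_1$ and differentiate the trajectory-level conditions at $t=0$; for sufficiency, build a linear selection for $d_2$ and establish invariance of $\calS$ under the resulting closed-loop dynamics by variation of constants) and is sound. The two technical points you flag --- regularity of the matching driving variable at $t=0$ and the global linear/measurable selection --- are indeed exactly where the cited proofs do their work, and they resolve in the way you anticipate.
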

    \begin{remark}\label{rem:simulation_relation}
        When constructing a simulation relation $\calS$, it is sometimes difficult to ensure that $\pi_{\X{i}}(\calS)\subset\calV_i$. For such cases, note that if $\calS$ satisfies the first condition in Proposition~\ref{prop:simulation_relation}, then
        \begin{equation}
            A_i\pi_{\X{i}}(\calS)\subset\pi_{\X{i}}(\calS) + \im G_i,
        \end{equation}
        hence $\pi_{\X{i}}(\calS)\subset\calV_i$ if and only if $\pi_{\X{i}}(\calS)\subset\ker H_i$.
    \end{remark}

    Simulation is then defined as follows.

    \begin{definition}\label{def:simulation}
        A system $\Xi_1$ is \emph{simulated by} $\Xi_2$, denoted as $\Xi_1\simby\Xi_2$, if there exists a simulation relation $\calS\subset\X{1}\times\X{2}$ of $\Xi_1$ by $\Xi_2$ such that $\pi_{\X{1}}(\calS) = \calV_1$. A simulation relation with this property is called a \emph{full simulation relation}.
    \end{definition}

    If $\Xi_1\simby\Xi_2$, then \emph{any} state trajectory of $\Xi_1$ can be matched by a state trajectory of $\Xi_2$ such that the outputs of $\Xi_1$ and $\Xi_2$ are identical. We can interpret this as $\Xi_2$ having richer dynamics than $\Xi_1$.
    \begin{remark}
        In view of \eqref{eq:consistent_subspace}, computing the consistent subspace of a system $\Xi_i$ amounts to computing the largest $(A_i,G_i)$-invariant subspace contained in $\ker H_i$, which can be done using the invariant subspace algorithm, see \cite{basile1992, trentelman2001} for details. Using the same algorithm, one can compute the largest simulation relation of $\Xi_1$ by $\Xi_2$ and thus determine whether $\Xi_1$ is simulated by $\Xi_2$, see \cite[Theorem~6]{besselink2019} and \cite[Remark~4]{besselink2019} for details. In other words, simulation is supported by efficient numerical procedures for verification.
    \end{remark}
    \begin{remark}
        If $\Xi_1\simby\Xi_2$ and $\Xi_2\simby \Xi_1$, then $\Xi_1$ and $\Xi_2$ can be shown to be \emph{bisimilar}, denoted by $\Xi_1\bisim\Xi_2$. Bisimilarity for systems of the form \eqref{eq:xi_sys} is defined in \cite{megawati2016}, and a proof of this statement is given in Proposition~5.3. Loosely speaking, bisimilar systems have the same external dynamics.
    \end{remark}
    \begin{remark}
        An important property that will be used throughout this paper is that simulation is a \emph{preorder} \cite[Lemma~2]{besselink2019}, i.e., it is \emph{reflexive} ($\Xi_i\simby\Xi_i$ for all $\Xi_i$) and \emph{transitive} ($\Xi_1\simby\Xi_2$ and $\Xi_2\simby\Xi_3$ imply that $\Xi_1\simby\Xi_3$).
    \end{remark}

	\section{Contracts}\label{sec:contracts}

	In this section, we will define contracts, contract implementation, and a notion of contract refinement that will allow us to compare contracts. Consider a system $\sys$ of the form \eqref{eq:sys}. The \emph{environment} $\env$ of $\sys$ is a system of the form
	\begin{equation}
		\env: \left\lbrace
		\begin{aligned}
			\dot x_e &= A_ex_e + G_ed_e,\\
			u &= C_ex_e,\\
			0 &= H_ex_e,
		\end{aligned}\right.
	\end{equation}
	with $x_e\in\X{e}$ and $d_e\in\D{e}$. Here, we have omitted the time variable $t$ for convenience.  The environment $\env$ is interpreted as a system that generates inputs for $\sys$, hence the interconnection of $\env$ and $\sys$ is given by
	\begin{equation}
		\env\meet\sys: \left\lbrace
		\begin{aligned}
			\bbm \dot x_e \\ \dot x \ebm &= \bbm A_e & 0 \\ BC_e & A \ebm\bbm x_e \\ x \ebm + \bbm G_e & 0 \\ 0 & G \ebm \bbm d_e \\ d\ebm,\\
			\bbm u \\ y \ebm &= \bbm C_e & 0 \\ 0 & C \ebm \bbm x_e \\ x \ebm,\\
			0 &= \bbm H_e & 0 \ebm \bbm x_e \\ x \ebm.
		\end{aligned}\right.
	\end{equation}
	which we have obtained by setting the output generated by $\env$ as input to $\sys$, as shown in Figure~\ref{fig:env_meet_sys}.
	\begin{figure}[t]
		\centering
		\begin{tikzpicture}[->,>=stealth',shorten >=1pt,auto,node distance=3cm,
            semithick]
            \tikzset{box/.style = {shape = rectangle,
                    color=black,
                    fill=white!96!black,
                    text = black,
                    inner sep = 1mm,
                    minimum width = 12mm,
                    minimum height = 6mm,
                    draw}
            }

			\node[box] (S1) at (0,0) {$\env$};
			\node[box] (S2) at (2,0) {$\sys$};

			\draw (S1) -- (S2);
			\draw (1, 0) -- (1, 0.5) -- (2.5, 0.5) --node[pos=0.7, above] {$u$} (4, 0.5);
			\draw (S2) -- node[pos=0.7, below] {$y$} (4, 0);

            \draw[-,dashed] (-1,0.75) -- (3,0.75) -- (3,-0.625) -- (-1,-0.625) -- (-1,0.75);
		\end{tikzpicture}
		\caption{The interconnection $\env\meet\sys$.}
		\label{fig:env_meet_sys}
		\vspace{-5mm}
	\end{figure}
	We are interested in specifying the dynamic behaviour of $\env\meet\sys$ only for relevant environments $\env$. This will be formalized with the notion of a contract, which will require the definition of another two systems. First, the \emph{assumptions} $\ass$ are a system of the form
	\begin{equation}
		\ass: \left\lbrace
		\begin{aligned}
			\dot x_{a} &= A_ax_a + G_ad_a,\\
			u &= C_ax_a,\\
			0 &= H_ax_a,
		\end{aligned}\right.
	\end{equation}
	with $x_a\in\X{a}$ and $d_a\in\D{a}$. Assumptions have the same form as environments and they can be compared using simulation. Second, the \emph{guarantees} $\gar$ are a system of the form
	\begin{equation}
		\gar: \left\lbrace
		\begin{aligned}
			\dot x_g &= A_gx_g + G_gd_g,\\
			\bbm u \\ y \ebm &= \bbm C^u_g \\ C^y_g \ebm x_g,\\
			0 &= H_g x_g,
		\end{aligned}\right.
	\end{equation}
	with $x_g\in\X{g}$ and $d_g\in\D{g}$. Guarantees have the same form as the interconnection $\env\meet\sys$ and they can be compared using simulation. With assumptions and guarantees defined, we are ready to define contracts.

	\begin{definition}
		A contract $\contract{}$ is a pair of assumptions and guarantees.
	\end{definition}

	A contract is used as a specification in the following sense.
	\begin{definition}
		Consider a contract $\contract{}$. An environment $\env$ is \emph{compatible} with $\con$ if
		\begin{equation}
			\env\simby\ass.
		\end{equation}
		A system $\sys$ \emph{implements} $\con$ if
		\begin{equation}\label{eq:implementation_def}
			\env\meet\sys\simby\gar
		\end{equation}
		for any environment $\env$ compatible with $\con$.
	\end{definition}

	In other words, the assumptions capture the available information about the dynamics of the environments in which our system is supposed to operate, thus leading to a class of compatible environments, while the guarantees specify the desired dynamics of our system when interconnected with a compatible environment, thus leading to a class of implementations.

	We can check if a given system $\sys$ implements a given contract $\contract{}$ without having to construct all compatible environments. To show this, we will make use of the following lemma.
    \begin{lemma}\label{lem:implementation}
        If $\env\simby\ass$, then $\env\meet\sys\simby\ass\meet\sys$.
    \end{lemma}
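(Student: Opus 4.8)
The plan is to construct a simulation relation of $\env\meet\sys$ by $\ass\meet\sys$ directly from a full simulation relation of $\env$ by $\ass$. Since $\env\simby\ass$, let $\calS_e\subset\X{e}\times\X{a}$ be a full simulation relation of $\env$ by $\ass$, so that $\pi_{\X{e}}(\calS_e) = \calV_e$ and $\pi_{\X{a}}(\calS_e)\subset\calV_a$. The natural candidate for a simulation relation of $\env\meet\sys$ by $\ass\meet\sys$ is
\begin{equation}
    \calS = \set{\big((x_e,x),(x_a,x)\big)}{(x_e,x_a)\in\calS_e,\ x\in\X{}}\subset(\X{e}\times\X{})\times(\X{a}\times\X{}),
\end{equation}
i.e. the relation that pairs the environment/assumption states according to $\calS_e$ and forces the two copies of the plant state $x$ to coincide. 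This is clearly a linear subspace.

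The core of the proof is then to verify the conditions of Proposition~\ref{prop:simulation_relation} for $\calS$, using the fact that $\calS_e$ satisfies them for $\env$ and $\ass$. First I would handle the projection/consistency conditions: I expect $\pi$ of $\calS$ onto each factor to be contained in the consistent subspace of the corresponding interconnection, which should follow from $\pi_{\X{e}}(\calS_e)=\calV_e$, $\pi_{\X{a}}(\calS_e)\subset\calV_a$, and the block-triangular structure of the interconnection dynamics — here Remark~\ref{rem:simulation_relation} is convenient, since it reduces the consistency check to membership in the relevant $\ker H$, which is immediate from the block form of the algebraic constraint $\bbm H_e & 0\ebm$ (resp. $\bbm H_a & 0\ebm$). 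Next, given $\big((x_e,x),(x_a,x)\big)\in\calS$ and a driving variable $(d_e,d)$ for $\env\meet\sys$ keeping the state in its consistent subspace, I would feed $d_e$ and the constraint $A_ex_e+G_ed_e\in\calV_e$ into the simulation-relation property of $\calS_e$ to obtain $d_a$ with $(A_ex_e+G_ed_e,\,A_ax_a+G_ad_a)\in\calS_e$ and $C_ex_e = C_ax_a$. Then I would choose the same $d$ for the plant component of $\ass\meet\sys$. The state-transition condition for $\calS$ asks that
\begin{equation}
    \left(\bbm A_ex_e+G_ed_e \\ BC_ex_e + Ax + Gd \ebm,\ \bbm A_ax_a+G_ad_a \\ BC_ax_a + Ax + Gd \ebm\right)\in\calS,
\end{equation}
and this holds precisely because the first blocks form an element of $\calS_e$ and the two second blocks are equal — the latter using $C_ex_e = C_ax_a$, which is exactly where output-matching in $\calS_e$ is needed. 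The output-matching condition for $\calS$, namely equality of $\bbm u\\ y\ebm = \bbm C_ex_e \\ Cx\ebm$ with $\bbm C_ax_a \\ Cx\ebm$, again reduces to $C_ex_e = C_ax_a$ together with the shared $x$.

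Finally I would verify fullness: $\pi_{\X{e}\times\X{}}(\calS)$ should equal the consistent subspace $\calV_{\env\meet\sys}$. The inclusion $\subset$ is the consistency check above; for $\supset$, given any consistent $(x_e,x)$ for $\env\meet\sys$ one has $x_e\in\calV_e = \pi_{\X{e}}(\calS_e)$, so there is $x_a$ with $(x_e,x_a)\in\calS_e$, and then $\big((x_e,x),(x_a,x)\big)\in\calS$ projects onto $(x_e,x)$. I anticipate the main obstacle to be the consistency/fullness bookkeeping rather than the dynamic conditions: one must be careful that the consistent subspace of $\env\meet\sys$ really is captured by "$x_e\in\calV_e$ and $x$ arbitrary", which relies on $\sys$ having no algebraic constraints of its own and on the block-triangular coupling $BC_e$ not interfering with $(A_e,G_e)$-invariance of $\calV_e$; Remark~\ref{rem:simulation_relation} is the clean way to discharge this, since it lets me avoid recomputing consistent subspaces and instead only check the kernel conditions, which are transparent from the block structure. $\hfill\blacksquare$
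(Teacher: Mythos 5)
Your proposal is correct and follows essentially the same route as the paper: the same relation $\calS = \set{(x_e,x,x_a,x)}{(x_e,x_a)\in\calS_e}$, the same use of Proposition~\ref{prop:simulation_relation} to extract $d_a$ from $\calS_e$, and the same observation that $C_ex_e=C_ax_a$ makes the two plant-state updates coincide. The only cosmetic difference is that you route the consistency bookkeeping through Remark~\ref{rem:simulation_relation}, whereas the paper simply identifies the consistent subspaces of the interconnections as $\calV_e\times\X{}$ and $\calV_a\times\X{}$ directly.
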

    \begin{proof}
        Let $\calS_e$ be a full simulation relation of $\env$ by $\ass$. We will show that the subspace $\calS\subset(\X{e}\times\X{})\times(\X{a}\times\X{})$ defined by
        \begin{equation}
            \calS = \set{(x_e,x,x_a,x)}{(x_e,x_a)\in\calS_e}
        \end{equation}
        is a full simulation relation of $\env\meet\sys$ by $\ass\meet\sys$. First, note that the consistent subspace of $\env\meet\sys$ is given by $\calV_e\times \X{}$, and the consistent subspace of $\ass \meet\sys$ is given by $\calV_a\times\X{}$, where $\calV_e$ and $\calV_a$ are the consistent subspaces of $\env$ and $\ass$, respectively. Since $\pi_{\X{e}}(\calS_e) = \calV_e$ and $\pi_{\X{a}}(\calS_e) \subset \calV_a$, it follows that
        \begin{equation}
            \pi_{\X{e}\times\X{}} (\calS) = \calV_{e}\times\X{} \qand \pi_{\X{a}\times\X{}}(\calS) \subset \calV_{a}\times\X{}.
        \end{equation}
        Let $(x_e,x,x_a,x)\in\calS$ and take $d_e\in\calD_e$, $d\in\calD$, such that
        \begin{equation}
            (A_ex_e + G_ed_e, BC_ex_e + Ax + Gd)\in\calV_e\times \X{}.
        \end{equation}
        For later reference, let $s = BC_ex_e + Ax + Gd$. As $(x_e,x_a)\in\calS_e$ and $A_ex_e + G_ed_e\in\calV_e$, it follows from Proposition 1 that there  there exists $d_a\in\calD_a$ such that
        \begin{align}
            (A_ex_e + G_ed_e, A_ax_a + G_ad_a) &\in \calS_e \label{eq:Se_invariance},\\
            C_ex_e &= C_ax_a \label{eq:Se_equality}.
        \end{align}
        Then \eqref{eq:Se_invariance} implies that
        \begin{equation}\label{eq:env_meet_sys_simby_env_meet_ass_cond1}
            (A_ex_e + G_ed_e, s, A_ax_a + G_ad_a, s) \in \calS,
        \end{equation}
        while \eqref{eq:Se_equality} implies that $s = BC_ax_a + Ax + Gd$ and
        \begin{equation}\label{eq:env_meet_sys_simby_env_meet_ass_cond2}
            \bbm C_e & 0 \\ 0 & C \ebm\bbm x_e \\ x \ebm = \bbm C_a & 0 \\ 0 & C \ebm \bbm x_a \\ x \ebm.
        \end{equation}
        Using Proposition~\ref{prop:simulation_relation}, we conclude that $\calS$ is a full simulation relation of $\env\meet\sys$ by $\ass\meet\sys$ and thus $\env\meet\sys\simby\ass\meet\sys$.
    \end{proof}

	As an almost immediate consequence of Lemma~\ref{lem:implementation}, we obtain the following necessary and sufficient condition for contract implementation.
	\begin{theorem}\label{thm:implementation}
		A system $\sys$ implements the contract $\contract{}$ if and only if
		\begin{equation}\label{eq:implementation_thm}
			\ass\meet\sys\simby\gar.\\[2mm]
		\end{equation}
	\end{theorem}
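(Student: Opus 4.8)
\textbf{Proof proposal for Theorem~\ref{thm:implementation}.} The plan is to prove the two implications separately, using only reflexivity and transitivity of simulation together with Lemma~\ref{lem:implementation}; no geometric computations are needed, since all the work has already been done in the lemma.

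For the ``only if'' direction, I would exploit the fact that assumptions $\ass$ have exactly the same form as environments. Hence $\ass$ is itself a legitimate environment, and by reflexivity of simulation we have $\ass\simby\ass$, so $\ass$ is compatible with $\con$. Applying the definition of implementation to this particular compatible environment $\env = \ass$ then yields $\ass\meet\sys\simby\gar$, which is \eqref{eq:implementation_thm}.

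For the ``if'' direction, assume \eqref{eq:implementation_thm} holds, and let $\env$ be an arbitrary environment compatible with $\con$, i.e., $\env\simby\ass$. By Lemma~\ref{lem:implementation} we get $\env\meet\sys\simby\ass\meet\sys$. Combining this with the hypothesis $\ass\meet\sys\simby\gar$ and using transitivity of simulation, we conclude $\env\meet\sys\simby\gar$. Since $\env$ was an arbitrary compatible environment, $\sys$ implements $\con$ by definition.

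I do not expect a genuine obstacle here: the theorem is essentially a corollary of Lemma~\ref{lem:implementation}, with the preorder properties of $\simby$ doing the rest. The only point requiring a moment's care is the observation in the ``only if'' direction that $\ass$ qualifies as an environment so that the implementation hypothesis can be instantiated at $\env=\ass$; this is what makes the single condition \eqref{eq:implementation_thm} capture the universally quantified condition \eqref{eq:implementation_def}.
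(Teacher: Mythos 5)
Your proof is correct and follows exactly the same route as the paper: instantiate the compatible environment at $\env=\ass$ (justified by reflexivity of $\simby$) for the ``only if'' direction, and combine Lemma~\ref{lem:implementation} with transitivity of simulation for the ``if'' direction. The paper's own proof is essentially identical, merely stating without comment that $\ass$ is a compatible environment where you explicitly invoke reflexivity.
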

	\begin{proof}
		Suppose that $\sys$ implements $\con$. Since $\ass$ is an environment compatible with $\con$, it follows that \eqref{eq:implementation_thm} holds. Conversely, suppose that \eqref{eq:implementation_thm} holds and let $\env$ be compatible with $\con$, that is, $\env \simby\ass$. In view of Lemma~\ref{lem:implementation}, we have that $\env\meet\sys\simby\ass\meet\sys$, hence $\env\meet\sys\simby\gar$ because simulation is transitive. Since $\env\meet\sys\simby\gar$ for any $\env$ compatible with $\con$, we conclude that $\sys$ implements $\con$.
	\end{proof}

    \begin{remark}\label{rem:implementation}
        Clearly, two contracts define the same class of compatible environments if and only if their assumptions are bisimilar. However, two contracts can define the same class of implementations even if their guarantees are not bisimilar. For instance, $\contract{}$ defines the same class of implementations as $\con' = (\ass,\ass\meet\gar)$, where $\ass\meet\gar$ is obtained by equating the outputs $u$ of $\ass$ and $\gar$, that is,
        \begin{equation}\label{eq:ass_meet_gar}
            \ass\meet\gar:\left\lbrace
            \begin{aligned}
                \bbm \dot x_a \\ \dot x_g \ebm &= \bbm A_a & 0 \\ 0 & A_g \ebm \bbm x_a \\ x_g \ebm + \bbm G_a & 0 \\ 0 & G_g \ebm\bbm d_a\\ d_g \ebm,\\
                \bbm u \\ y \ebm &= \bbm C_a & 0 \\ 0 & C^y_{g} \ebm \bbm x_a \\ x_g \ebm,\\
                0 &= \bbm H_a & 0 \\ 0 & H_g \\ C_a & - C^u_g \ebm.
            \end{aligned}\right.\hspace{-3mm}
        \end{equation}
        Indeed, if $\calS$ is a full simulation relation of $\ass\meet\sys$ by $\gar$, then
        \begin{equation}
            \calS' = \set{(x_{a},x,x_{a},x_g)}{(x_a,x,x_g)\in\calS}
        \end{equation}
        is a full simulation relation of $\ass\meet\sys$ by $\ass\meet\gar$, hence, due to Theorem~\ref{thm:implementation}, every implementation of $\con$ is also an implementation of $\con'$. Conversely, it can be shown that $\ass\meet\gar\simby\gar$ with a full simulation relation given by
        \begin{equation}
            \calS = \set{(x_a,x_g, x_g)}{(x_a,x_g)\in\calV_{a\meet g}},
        \end{equation}
        where $\calV_{a\meet g}$ is the consistent subspace of $\ass\meet\gar$. Therefore, due to Theorem~\ref{thm:implementation} and the transitivity of simulation, every implementation of $\con'$ is also an implementation of $\con$.
    \end{remark}
    \begin{remark}
        A contract $\contract{}$ is \emph{consistent} if it can be implemented. Not every contract is consistent. To see this, note that $u$ is an input in $\sys$, hence any restriction on the dynamics of $u$ in $\ass\meet\sys$ come from the assumptions $\ass$. Therefore, $\ass\meet\sys\simby\gar$ only if any restrictions on the dynamics of $u$ in $\gar$ are already present in $\ass$, that is, $\ass\simby\gar^u$, where $\gar^u$ is obtained from $\gar$ by considering only $u$ as an output. Indeed, if $\calS$ is a full simulation relation of $\ass\meet\sys$ by $\gar$, then it can be shown that $\pi_{\X{a}\times\X{g}}(\calS)$ is a full simulation relation of $\ass$ by $\gar^u$. Consequently, the condition $\ass\simby\gar^u$ is necessary (but not sufficient) for consistency.
    \end{remark}

    Next, we define the notion of refinement, which allows us to compare two contracts.
    \begin{definition}\label{def:refinement}
    	A contract $\contract{1}$ refines another contract $\contract{2}$, denoted as $\con_1\simby\con_2$, if
    	\begin{equation}\label{eq:refinement}
    		\ass_2\simby\ass_1 \qand \ass_2\meet\gar_1\simby \gar_2\\[2mm]
    	\end{equation}
    \end{definition}

	Refinement allows us to determine if a contract expresses a stricter specification than another contract. In particular, the following theorem shows that if $\con_1\simby\con_2$, then $\con_1$ defines a larger class of compatible environments but a smaller class of implementations than $\con_2$.
    \begin{theorem}\label{thm:refinement}
        If $\con_1\simby\con_2$, then the following hold:
        \begin{enumerate}
            \item any environment compatible with $\con_2$ is compatible with $\con_1$;
            \item any implementation of $\con_1$ is an implementation of $\con_2$.\vspace{1mm}
        \end{enumerate}
    \end{theorem}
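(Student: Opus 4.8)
The plan is to establish the two claims separately. Claim~1 is immediate: if $\env$ is compatible with $\con_2$, then $\env\simby\ass_2$, and since $\con_1\simby\con_2$ gives $\ass_2\simby\ass_1$, transitivity of simulation yields $\env\simby\ass_1$, so $\env$ is compatible with $\con_1$.

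For Claim~2, let $\sys$ implement $\con_1$. By Theorem~\ref{thm:implementation} this is equivalent to $\ass_1\meet\sys\simby\gar_1$, and the goal, again by Theorem~\ref{thm:implementation}, is to show $\ass_2\meet\sys\simby\gar_2$. I would argue along the following chain of simulations. First, $\con_1\simby\con_2$ gives $\ass_2\simby\ass_1$, so Lemma~\ref{lem:implementation}, applied with $\ass_2$ in the role of the environment and $\ass_1$ in the role of the assumptions, yields $\ass_2\meet\sys\simby\ass_1\meet\sys$; combined with $\ass_1\meet\sys\simby\gar_1$ and transitivity, this gives $\ass_2\meet\sys\simby\gar_1$. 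Second, I would upgrade this to $\ass_2\meet\sys\simby\ass_2\meet\gar_1$: by the argument in Remark~\ref{rem:implementation}, the contracts $(\ass_2,\gar_1)$ and $(\ass_2,\ass_2\meet\gar_1)$ define the same class of implementations, and since $\ass_2\meet\sys\simby\gar_1$ means that $\sys$ implements $(\ass_2,\gar_1)$, it follows that $\sys$ also implements $(\ass_2,\ass_2\meet\gar_1)$, i.e., $\ass_2\meet\sys\simby\ass_2\meet\gar_1$. Finally, $\con_1\simby\con_2$ also gives $\ass_2\meet\gar_1\simby\gar_2$, so transitivity yields $\ass_2\meet\sys\simby\gar_2$, and Theorem~\ref{thm:implementation} lets us conclude that $\sys$ implements $\con_2$.

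The only nontrivial step is the upgrade $\ass_2\meet\sys\simby\gar_1 \Rightarrow \ass_2\meet\sys\simby\ass_2\meet\gar_1$. It would be tempting to hope that $\gar_1\simby\gar_2$ alone suffices to finish, but refinement only provides the weaker $\ass_2\meet\gar_1\simby\gar_2$, so this enlargement of the right-hand system from $\gar_1$ to $\ass_2\meet\gar_1$ is genuinely needed. Concretely, starting from a full simulation relation $\calS\subset(\X{a_2}\times\X{})\times\X{g_1}$ of $\ass_2\meet\sys$ by $\gar_1$, one takes $\calS' = \set{(x_a,x,x_a,x_g)}{(x_a,x,x_g)\in\calS}$ and checks that it is a full simulation relation of $\ass_2\meet\sys$ by $\ass_2\meet\gar_1$. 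The conditions of Proposition~\ref{prop:simulation_relation} transfer directly from $\calS$; the point requiring care is that the additional algebraic constraint $C_{a_2}x_a = C^u_{g_1}x_g$ built into $\ass_2\meet\gar_1$ holds on $\calS'$, which is exactly the $u$-component of the output-matching property of $\calS$, and that $\pi_{\X{a_2}\times\X{g_1}}(\calS')$ lies in the consistent subspace of $\ass_2\meet\gar_1$, for which Remark~\ref{rem:simulation_relation} reduces the task to checking membership in the relevant kernel. Since this is precisely the computation carried out in Remark~\ref{rem:implementation}, invoking it directly is the cleanest route and avoids redoing the calculation here.
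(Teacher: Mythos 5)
Your proof is correct and follows essentially the same route as the paper: claim 1 by transitivity from $\ass_2\simby\ass_1$, and claim 2 by establishing $\ass_2\meet\sys\simby\gar_1$ (the paper does this by noting $\ass_2$ is itself a compatible environment of $\con_1$, which unwinds to exactly your Lemma~\ref{lem:implementation}-plus-transitivity argument), then upgrading to $\ass_2\meet\sys\simby\ass_2\meet\gar_1$ via the construction of Remark~\ref{rem:implementation}, and finishing with $\ass_2\meet\gar_1\simby\gar_2$ and transitivity. Your identification of the upgrade step as the one genuinely nontrivial point, and your explicit handling of the added algebraic constraint and the consistent subspace there, matches the paper's reasoning.
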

	\begin{proof}
		Let $\contract{1}$ and $\contract{2}$, and suppose that $\con_1\simby\con_2$, that is, \eqref{eq:refinement} holds. Let $\env$ be an environment compatible with $\con_2$, that is, $\env\simby\ass_2$. Since $\ass_2\simby\ass_1$ and simulation is transitive, it follows that $\env\simby\ass_1$ and thus $\env$ is also compatible with $\con_1$. Next, suppose that $\sys$ implements $\con_1$. Note that $\ass_2$ is an environment compatible with $\con_1$ because $\ass_2\simby\ass_1$. Consequently, we must have that $\ass_2\meet\sys\simby\gar_1$. As explained in Remark~\ref{rem:implementation}, this implies that $\ass_2\meet\sys\simby\ass_2\meet\gar_1$, hence $\ass_2\meet\sys\simby\gar_2$ because $\ass_2\meet\gar_1\simby\gar_2$ and simulation is transitive. Using Theorem~\ref{thm:implementation}, we conclude that $\sys$ also implements $\con_2$.
	\end{proof}

	Intuitively, Theorem~\ref{thm:refinement} tells us that $\con_1\simby\con_2$ only if $\con_1$ has stricter guarantees than $\con_2$ that have to be met in the presence of weaker assumptions than those of $\con_2$. In other words, $\con_1$ expresses a stricter specification than $\con_2$.

	\begin{remark}
		It is not clear whether the converse of Theorem~\ref{thm:refinement} is true. Although any environment compatible with $\contract{2}$ is compatible with $\contract{1}$ if and only if $\ass_2\simby\ass_1$, it is possible that any implementation of $\con_1$ is an implementation of $\con_2$ even if $\ass_2\meet\gar_1\simby\gar_2$ does not hold. Nevertheless, we know that the converse of Theorem~\ref{thm:refinement} is true when simulation is replaced by inclusion of external behaviour, as shown in \cite{shali2021b}. Behavioural inclusion and simulation are closely related (they are equivalent for deterministic systems), which suggests that condition \eqref{eq:refinement} is ``close" to being equivalent to the conditions in Theorem~\ref{thm:refinement}. 
	\end{remark}

    \section{Series composition of contracts}\label{sec:series_composition}

    In this section, we will define the series composition of two contracts and will show that it satisfies desirable properties for modular analysis. The series composition of contracts can be used to reason about the series interconnection of systems on the basis of the contracts on its components. Loosely speaking, we want the series composition of two contracts to be implemented by the series interconnection of \emph{any} of their implementations. To make this precise, we first define the series interconnection of systems of the form \eqref{eq:sys}.
    \begin{definition}
        Consider systems $\sys_1$ and $\sys_2$ of the form~\eqref{eq:sys}. The \emph{series interconnection} of $\sys_1$ to $\sys_2$, denoted as ${\sys_1\to\sys_2}$, is obtained by setting the output of $\sys_1$ as input of $\sys_2$, as shown in Figure~\ref{fig:sys_series_interconnection}. In other words, the series interconnection $\sys_1\to\sys_2$ is given by
        \begin{equation}
            \sys_1\to\sys_2: \left\lbrace
            \begin{aligned}
                \bbm \dot x_{1} \\ \dot x_{2} \ebm &= \bbm A_{1} & 0 \\ B_2C_1 & A_{2} \ebm \bbm x_{1} \\ x_{2} \ebm + \bbm B_1 \\ 0 \ebm u\\
                &\qquad + \bbm G_{1} & 0 \\ 0 & G_{2} \ebm \bbm d_{1} \\ d_{2} \ebm,\\[1mm]
                y &= \bbm 0 & C_2 \ebm \bbm x_{1} \\ x_{2} \ebm.
            \end{aligned}\right.\\[2mm]
        \end{equation}
    \end{definition}
    \begin{figure}
        \centering
        \begin{tikzpicture}[->,>=stealth',shorten >=1pt,auto,node distance=3cm,
            semithick]
            \tikzset{box/.style = {shape = rectangle,
                    color=black,
                    fill=white!96!black,
                    text = black,
                    inner sep = 1mm,
                    minimum width = 12mm,
                    minimum height = 6mm,
                    draw}
            }
            \node[box] (S1) at (0,0) {$\sys_1$};
            \node[box] (S2) at (2.75,0) {$\sys_2$};

            \draw (S1) -- node[pos=0.5, above] {$y_{1} = u_2$} (S2);
            \draw (-2.5,0) -- node[pos=0.25, above] {$u_{}$} node[pos=0.75, above] {$u_1$} (S1);
            \draw (S2) -- node[pos=0.25, above] {$y_{2}$} node[pos=0.75, above] {$y_{}$} (5.25,0);

            \draw[-,dashed] (-1.5,0.625) -- (4.25,0.625) -- (4.25,-0.625) -- (-1.5,-0.625) -- (-1.5,0.625);
        \end{tikzpicture}
        \caption{The series interconnection $\sys_1\to\sys_2$.}
        \label{fig:sys_series_interconnection}
    \end{figure}

    Therefore, given contracts $\con_1$ and $\con_2$ for $\sys_1$ and $\sys_2$, respectively, our goal is to define a contract $\con_1\to\con_2$ which $\sys_1\to\sys_2$ is guaranteed to implement. This will naturally lead us to consider the series interconnection of guarantees, defined below.
    \begin{definition}\label{def:gar_series_interconnection}
        Consider guarantees $\gar_1$ and $\gar_2$. The \emph{series interconnection} of $\gar_1$ to $\gar_2$, denoted as $\gar_1\to\gar_2$, is obtained by setting the output $y_1$ of $\gar_1$ equal to the output $u_2$ of $\gar_2$, as shown in Figure~\ref{fig:gar_series_interconnection}. In other words, the series interconnection $\gar_1\to\gar_2$ is given by
        \begin{equation}
        \gar_1\to\gar_2: \left\lbrace
        \begin{aligned}
            \bbm \dot x_{g_1} \\ \dot x_{g_2} \ebm &= \bbm A_{g_1} & 0 \\ 0 & A_{g_2} \ebm \bbm x_{g_1} \\ x_{g_2} \ebm \\ &\qquad+ \bbm G_{g_1} & 0 \\ 0 & G_{g_2} \ebm \bbm d_{g_1} \\ d_{g_2} \ebm,\\[1mm]
            \bbm u \\ y \ebm &= \bbm C^u_{g_1} & 0 \\ 0 & C^y_{g_2} \ebm \bbm x_{g_1} \\ x_{g_2} \ebm\\[1mm]
            0 &= \bbm H_{g_1} & 0 \\ 0 & H_{g_2} \\ C^y_{g_1} & - C^u_{g_2} \ebm \bbm x_{g_1} \\ x_{g_2} \ebm.
        \end{aligned}\right.\\[2mm]
    \end{equation}
    \end{definition}

    \begin{figure}
        \centering
        \begin{tikzpicture}[->,>=stealth',shorten >=1pt,auto,node distance=3cm,
            semithick]
            \tikzset{box/.style = {shape = rectangle,
                    color=black,
                    fill=white!96!black,
                    text = black,
                    inner sep = 1mm,
                    minimum width = 12mm,
                    minimum height = 6mm,
                    draw}
            }
            \node[box] (G1) at (0,0) {$\gar_1$};
            \node[box] (G2) at (0,-1.25) {$\gar_2$};
            \node[outer sep=0pt, inner sep=0pt] (equality) at (1.5,-.625) {$\scalebox{3}[1]{=}$};

            \draw ([yshift=1mm] G1.east) --node[pos=0.8, above] {$u_{}$} node[pos=0.15, above] {$u_1$} (3,0.1);
            \draw ([yshift=-1mm] G2.east) --node[pos=0.8, below] {$y^{}$} node[pos=0.15, below] {$y_2^{}$} (3,-1.35);

            \draw ([yshift=-1mm] G1.east) --node[pos=0.4, below] {$y_1$}  (1.5,-0.1) -- (equality);
            \draw ([yshift=1mm] G2.east) --node[pos=0.4, above] {$u_2$} (1.5,-1.15) -- ([yshift=1pt] equality.south);

%
            \draw[-,dashed] (-1,0.625) -- (2,0.625) -- (2,-1.875) -- (-1,-1.875) -- (-1,0.625);
        \end{tikzpicture}
        \caption{The series interconnection $\gar_1\to\gar_2$.}
        \label{fig:gar_series_interconnection}
        \vspace{-5mm}
    \end{figure}

    Since $\sys_1$ and $\sys_2$ are designed to work only in interconnection with environments compatible with $\con_1$ and $\con_2$, respectively, it is only natural to require that, for any environment $\env$ compatible with $\con_1\to\con_2$, the environments of $\sys_1$ and $\sys_2$ in the interconnection $\env\meet(\sys_1\to\sys_2)$ are compatible with $\con_1$ and $\con_2$, respectively. Consequently, since the environment of $\sys_1$ in $\env\meet(\sys_1\to\sys_2)$ is $\env$ itself, it immediately follows that we must have $\env\simby\ass_1$. On the other hand, since the environment of $\sys_2$ in $\env\meet(\sys_1\to\sys_2)$ is $(\env\meet\sys_1)^y$, where $(\env\meet\sys_1)^y$ is obtained from $\env\meet\sys_1$ by considering only $y_1$ as an output, it follows that we must also have $(\env\meet\sys_1)^y\simby\ass_2$. With this in mind, consider the following definition.
    \begin{definition}\label{def:series_composition}
    	Consider contracts $\contract{1}$ and $\contract{2}$. We say that $\con_1$ is \emph{series composable} to $\con_2$ if
    	\begin{equation}
    		(\ass_1\meet\gar_1)^y\simby\ass_2.
    	\end{equation}
    	In this case, the \emph{series composition} of $\con_1$ to $\con_2$, denoted by $\con_1\to\con_2$, is defined as
        \begin{equation}
            \con_1\to\con_2 = (\ass_1, \gar_1\to\gar_2).\\[2mm]
        \end{equation}
    \end{definition}

    The following theorem, whose proof can be found in the appendix, shows that series composition satisfies the properties mentioned above.
    \begin{theorem}\label{thm:series_composition}
        Consider contracts $\con_1$ and $\con_2$ such that $\con_1$ is series composable to $\con_2$. If $\sys_1$ and $\sys_2$ implement $\con_1$ and $\con_2$, respectively, and $\env$ is compatible with $\con_1\to\con_2$, then the following conditions hold:
        \begin{enumerate}
            \item the environment of $\sys_1$ in $\env\meet(\sys_1\to\sys_2)$ is compatible with $\con_1$;
            \item the environment of $\sys_2$ in $\env\meet(\sys_1\to\sys_2)$ is compatible with $\con_2$;
            \item $\sys_1\to\sys_2$ implements $\con_1\to\con_2$.\vspace{1mm}
        \end{enumerate}
    \end{theorem}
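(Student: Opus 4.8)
The plan is to establish items~1--3 in order, reducing each to properties of simulation established earlier. I will repeatedly use the elementary observation that simulation is preserved under restriction of the output: deleting output components changes neither the dynamics nor the consistent subspaces and only weakens the output-matching requirement of Proposition~\ref{prop:simulation_relation}, so any full simulation relation of a system $\Xi_1$ by $\Xi_2$ remains a full simulation relation of the restricted systems. Item~1 is then immediate: in $\env\meet(\sys_1\to\sys_2)$ the block $\sys_1$ is fed directly by $\env$, so $\env$ itself is the environment of $\sys_1$, and $\env\simby\ass_1$ holds because $\ass_1$ is the assumptions of $\con_1\to\con_2$ and $\env$ is compatible with $\con_1\to\con_2$ by hypothesis.

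For item~2, the environment of $\sys_2$ in $\env\meet(\sys_1\to\sys_2)$ is $(\env\meet\sys_1)^y$, so I would prove $(\env\meet\sys_1)^y\simby\ass_2$ by a chain of simulations: from $\env\simby\ass_1$, Lemma~\ref{lem:implementation} gives $\env\meet\sys_1\simby\ass_1\meet\sys_1$; restricting to the $y_1$-output gives $(\env\meet\sys_1)^y\simby(\ass_1\meet\sys_1)^y$; since $\sys_1$ implements $\con_1$, Theorem~\ref{thm:implementation} gives $\ass_1\meet\sys_1\simby\gar_1$, and the construction in Remark~\ref{rem:implementation} upgrades this to $\ass_1\meet\sys_1\simby\ass_1\meet\gar_1$, hence $(\ass_1\meet\sys_1)^y\simby(\ass_1\meet\gar_1)^y$; finally series composability is precisely $(\ass_1\meet\gar_1)^y\simby\ass_2$. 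Transitivity of $\simby$ closes item~2.

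For item~3, Theorem~\ref{thm:implementation} reduces the claim to $\ass_1\meet(\sys_1\to\sys_2)\simby\gar_1\to\gar_2$. I would first record that $\ass_1\meet(\sys_1\to\sys_2)$ has state $(x_{a_1},x_1,x_2)$, that its $(x_{a_1},x_1)$-subdynamics coincides with that of $\ass_1\meet\sys_1$, that its $x_2$-dynamics reads $\dot x_2=B_2C_1x_1+A_2x_2+G_2d_2$, and that its only algebraic constraint is $H_{a_1}x_{a_1}=0$, so its consistent subspace is $\calV_{a_1}\times\X{1}\times\X{2}$. Then take a full simulation relation $\calS_1$ of $\ass_1\meet\sys_1$ by $\gar_1$ (it exists since $\sys_1$ implements $\con_1$), a full simulation relation $\calS_a$ of $(\ass_1\meet\sys_1)^y$ by $\ass_2$ (it exists by item~2 applied with $\env=\ass_1$, which is compatible with $\con_1\to\con_2$ by reflexivity of $\simby$), and a full simulation relation $\calS_2$ of $\ass_2\meet\sys_2$ by $\gar_2$ (it exists since $\sys_2$ implements $\con_2$). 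The candidate relation $\calS$ collects all tuples $(x_{a_1},x_1,x_2,x_{g_1},x_{g_2})$ such that $(x_{a_1},x_1,x_{g_1})\in\calS_1$ and there is some $x_{a_2}$ with $(x_{a_1},x_1,x_{a_2})\in\calS_a$ and $(x_{a_2},x_2,x_{g_2})\in\calS_2$. Fullness of $\calS$ follows because $\calS_1,\calS_a,\calS_2$ are full and the relevant consistent subspaces factor as products. To verify the simulation property via Proposition~\ref{prop:simulation_relation}, I would, given an admissible transition of $\ass_1\meet(\sys_1\to\sys_2)$ driven by $(d_{a_1},d_1,d_2)$, feed the \emph{same} $(d_{a_1},d_1)$ into Proposition~\ref{prop:simulation_relation} for $\calS_1$ and for $\calS_a$ (both govern exactly the $(x_{a_1},x_1)$-subdynamics), producing matching driving variables $d_{g_1}$ and $d_{a_2}$; the output-matching clause of $\calS_a$ yields $C_1x_1=C_{a_2}x_{a_2}$, so the $x_2$-update $B_2C_1x_1+A_2x_2+G_2d_2$ coincides with the $x_2$-update that $(d_{a_2},d_2)$ induces inside $\ass_2\meet\sys_2$, which licenses applying Proposition~\ref{prop:simulation_relation} for $\calS_2$ to obtain $d_{g_2}$. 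The resulting tuple lies in $\calS$ by construction; the external outputs match via $\calS_1$ (for $u$) and $\calS_2$ (for $y$); and the identity chain $C^y_{g_1}x_{g_1}=C_1x_1=C_{a_2}x_{a_2}=C^u_{g_2}x_{g_2}$ together with Remark~\ref{rem:simulation_relation} places the $(x_{g_1},x_{g_2})$-projection of $\calS$ inside the consistent subspace of $\gar_1\to\gar_2$.

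I expect item~3 to be the main obstacle, and within it the decisive point is that $\sys_2$'s driving term is $B_2C_1x_1$ in the genuine interconnection but $B_2C_{a_2}x_{a_2}$ in the reference system $\ass_2\meet\sys_2$; these agree precisely because of the output-matching built into $\calS_a$, i.e., the series composability hypothesis $(\ass_1\meet\gar_1)^y\simby\ass_2$ is exactly what synchronizes the $x_2$-dynamics of the two systems and makes the three relations compose. The remaining obligations---the consistent-subspace and fullness checks---are routine given Remark~\ref{rem:simulation_relation} and the product structure of the consistent subspaces of the $\meet$- and $\to$-interconnections.
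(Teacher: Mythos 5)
Your proposal is correct and follows essentially the same route as the paper's proof: the same simulation chain for items 1 and 2, and for item 3 the same composite relation built from a full simulation relation of $\ass_1\meet\sys_1$ by $\gar_1$, of $(\ass_1\meet\sys_1)^y$ by $\ass_2$, and of $\ass_2\meet\sys_2$ by $\gar_2$, with the output-matching identity $C_1x_1=C_{a_2}x_{a_2}$ synchronizing the $x_2$-dynamics exactly as in the appendix. You correctly isolate the decisive point (the role of series composability in gluing the three relations) and the consistent-subspace check via Remark~\ref{rem:simulation_relation}.
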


	\begin{remark}
		Contract refinement and the series composition of contracts have properties that enable the independent design of components within interconnected systems. As a simple example, suppose that we want to design $\sys_1$ and $\sys_2$ such that the series interconnection $\sys_1\to\sys_2$ implements an overall contract $\con$. Using the definition of the series composition, we can construct contracts $\con_1$ and $\con_2$ such that $\con_1$ is series composable to $\con_2$ and $\con_1\to\con_2\simby\con$. Consequently, if $\sys_1$ implements $\con_1$ and $\sys_2$ implements $\con_2$, then, due to Theorem~\ref{thm:series_composition}, we know that $\sys_1\to\sys_2$ implements $\con_1\to\con_2$, and thus, due to Theorem~\ref{thm:refinement}, $\sys_1\to\sys_2$ implements $\con$. This means that the designer of $\sys_1$ need only implement $\con_1$ and need not concern themselves with the design of $\sys_2$ or the integration of $\sys_1$ into the series interconnection $\sys_1\to\sys_2$. In other words, $\sys_1$ can be designed independently of $\sys_2$. The same is true for $\sys_2$, of course.
	\end{remark}

    \begin{remark}
        It can be shown that the series composition has the following property in relation to refinement. Suppose that $\con_1$ is series composable to $\con_2$ and $\con_1'$ is series composable to $\con_2'$. If $\con_1'\simby\con_1$ and $\con_2'\simby\con_2$, then
        \begin{equation}
            \con_1'\to\con_2'\simby\con_1\to\con_2
        \end{equation}
        The proof of this statement is rather long and technical, and is thus beyond the scope of this paper.
    \end{remark}

	We conclude this section with a simple academic example of series composability and the series composition.
		\begin{example}
			Consider the contract $\contract{}$ with
			\begin{equation}
				\ass: \left\lbrace
				\begin{aligned}
					\dot x_a = d_a,\\
					u = x_a,
				\end{aligned}\right.
				\quad\
				\gar: \left\lbrace
				\begin{aligned}
					\dot x_g &= \bbm 0 & 0 \\ I & 0 \ebm x_g + \bbm I \\ 0 \ebm d_g,\\
					\bbm u \\ y \ebm &= \bbm I & 0 \\ 0 & I \ebm x_g.
				\end{aligned}\right.\hspace{-2mm}\vspace{2mm}
			\end{equation}
		\end{example}
		Note that $u$ is essentially free in $\ass$ since the only restriction is that $\dot u = d_a$ for some $d_a:\bbR_\geq \to \calD_a$. Similarly, $u$ is essentially free in $\gar$, whereas $y$ is such that $\dot y = u$, that is, $\gar$ represents a single integrator. Since $u$ is essentially free in $\ass$, we expect that $(\ass\meet\gar)^y\simby \ass$. Indeed, we can show that the subspace $\calS\subset(\X{a}\times\X{g})\times\X{a}$ defined by
		\begin{equation*}
			\calS = \set{(x_a, x_g, x_{a}')}{(x_a,x_g)\in\calV_{a\meet g},\ x_a' = \bbm 0 & I \ebm x_g}
		\end{equation*}
		is a full simulation relation of $(\ass\meet\gar)^y$ by $\ass$, where $\ass\meet\gar$ is given in \eqref{eq:ass_meet_gar}. To do this, let $(x_a, x_g, x_a')\in \calS$ and take $d_a\in\D{a}$ and $d_g\in\D{g}$ such that
		\begin{equation}
			\left( d_a, \bbm 0 & 0 \\ I & 0 \ebm x_g + \bbm I \\ 0 \ebm d_g\right) \in \calV_{a\meet g}.
		\end{equation}
		Note that $d_a' = \bbm I & 0 \ebm x_g$ is such that
		\begin{equation}
			\left( d_a, \bbm 0 & 0 \\ I & 0 \ebm x_g + \bbm I \\ 0 \ebm d_g, d_a'\right) \in \calS.
		\end{equation}
		Furthermore, since $x_a' = \bbm 0 & I \ebm x_g$, it follows that
		\begin{equation}
			\bbm 0 & 0 & I \ebm \bbm x_a \\ x_g \ebm = x_a',
		\end{equation}
		hence $\calS$ is a simulation relation of $(\ass\meet\gar)^y$ by $\ass$ due to Proposition~\ref{prop:simulation_relation}. As $\pi_{\X{a}\times\X{g}}(\calS) = \calV_{a\meet g}$, it follows that $\calS$ is a full simulation relation and $(\ass\meet\gar)^y\simby \ass$. This means that $\con$ is series composable to $\con$ and $\con\to\con = (\ass,\gar\to\gar).$ Note that, by Definition~\ref{def:gar_series_interconnection}, $\gar\to\gar$ is given by
		\begin{equation}
			\gar\to\gar: \left\lbrace
			\begin{aligned}
				\dot x_g &= \bbm 0 & 0 & 0 & 0\\ I & 0& 0 & 0 \\ 0 & 0 & 0 & 0 \\ 0 & 0 & I & 0 \ebm x_g + \bbm I & 0 \\ 0 & 0 \\ 0 & I \\ 0 & 0 \ebm d_g,\\
				\bbm u \\ y \ebm &= \bbm I & 0 & 0 & 0 \\ 0 & 0 & 0 & I \ebm x_g,\\
				0 & = \bbm 0 & I & -I & 0 \ebm x_g
			\end{aligned}\right.
		\end{equation}
		and we have that $\ddot y = u$, that is, $\gar\to\gar$ represents a double integrator, as expected.

	\section{Conclusion}\label{sec:conclusion}
	
	We presented assume-guarantee contracts for linear dynamical systems with inputs and outputs. In particular, we defined contracts as a pair of linear dynamical systems called assumptions and guarantees. We defined contract implementation using the notion of simulation. We also defined and characterized notions of contract refinement and the series composition of contracts. All relevant conditions are in terms of simulation and can be verified using the efficient numerical algorithm for verifying simulation.

    Future work will focus on defining different types of contract composition (e.g., feedback) in order to reason about more general system interconnections, and on addressing the problems of constructing implementations and synthesizing controllers for implementations.

    \appendix
    \hspace{1em}\emph{Proof of Theorem~\ref{thm:series_composition}:}
    To begin with, let $\contract{1}$ and $\contract{2}$. Note that the environments of $\sys_1$ and $\sys_2$ in $\env\meet(\sys_1\to\sys_2)$ are given by $\env$ and $(\env\meet\sys_1)^y$, respectively. Therefore, the first two conditions can be rewritten as:
    \begin{enumerate}
        \item $\env\simby\ass_1$;
        \item $(\env\meet\sys_1)^y\simby\ass_2$.
    \end{enumerate}
    The first condition holds because $\env$ is compatible with
    \begin{equation}
        \con_1\to\con_2 = (\ass_1, \gar_1\to\gar_2).
    \end{equation}
    Then, due to Lemma~\ref{lem:implementation}, it follows that
    \begin{equation}
        (\env\meet\sys_1)^y\simby(\ass_1\meet\sys_1)^y.
    \end{equation}
    We have that $\ass_1\meet\sys_1\simby\gar_1$ because $\sys_1$ implements $\con_1$. As explained in Remark~\ref{rem:implementation}, this implies that $\ass_1\meet\sys_1\simby\ass_1\meet\gar_1$ and, in particular, that
    \begin{equation}\label{eq:simulation_S12}
        (\ass\meet\sys_1)^y \simby (\ass_1\meet\gar_1)^y.
    \end{equation}
    We also have that $(\ass_1\meet\gar_1)^y\simby\ass_2$ because $\con_1$ is series composable to $\con_2$. Since simulation is transitive, it follows that the second condition also holds.

    Due to Theorem~\ref{thm:implementation}, the third condition is equivalent to:
    \begin{itemize}
        \item[3)] $\ass_1\meet(\sys_1\to\sys_2)\simby\gar_1\to\gar_2$.
    \end{itemize}
    We will show that the third condition holds by constructing a full simulation relation of $\ass_1\meet(\sys_1\to\sys_2)$ by $\gar_1\to\gar_2$. To this end, let $\calS_1$ be a full simulation relation of $\ass_1\meet\sys_1$ by $\gar_1$, $\calS_2$ be a full simulation relation of $\ass_2\meet\sys_2$ by $\gar_2$, and $\calR$ be a full simulation relation of $(\ass_1\meet\sys_1)^y$ by $\ass_2$. Note that $\calS_1$ and $\calS_2$ exist because $\sys_1$ implements $\con_1$ and $\sys_2$ implements $\con_2$. On the other hand, $\calR$ exists because the second condition holds and $\ass_1$ is compatible with $\con_1\to\con_2$. We claim that the subspace $\calS\subset(\X{a}\times\X{1}\times\X{2})\times(\X{g_1}\times\X{g_2})$ defined by
    \begin{equation}
        \calS = \set{(x_{a_1}, x_1, x_2, x_{g_1}, x_{g_2})}{
            \begin{aligned}
                (x_{a_1}, x_1, x_{g_1}) &\in \calS_1\\
                (x_{a_1}, x_1, x_{a_2}) &\in \calR\\
                (x_{a_2}, x_2, x_{g_2}) &\in \calS_{2}
        \end{aligned}}
    \end{equation}
    is a full simulation relation of $\ass_1\meet(\sys_1\to\sys_2)$ by $\gar_1\to\gar_2$.

    To show this, we first note that the consistent subspace of $\ass\meet(\sys_1\to\sys_2)$ is $\calV_{a}\times\X{1}\times\X{2}$. With this in mind, let $(x_{a_1}, x_1, x_2, x_{g_1}, x_{g_2})\in\calS$ and take $d_{a_1}\in\D{a_1}$, $d_1\in\D{1}$ and $d_2\in\D{2}$ such that $A_{a_1}x_{a_1} + G_{a_1}d_{a_1}\in\calV_{a_1}$. Then $(x_{a_1}, x_1, x_{g_1}) \in \calS_1$ and $A_{a_1}x_{a_1} + G_{a_1}d_{a_1}\in\calV_{a_1}$, hence, due to Proposition~\ref{prop:simulation_relation}, there exists $d_{g_1}\in\D{g_1}$ such that
    \begin{align}\label{eq:S1_invariance}
        \bbm A_{a_1}x_{a_1} + G_{a_1} d_{a_1} \\ B_1C_{a_1}x_{a_1} + A_1x_1 + G_1d_1 \\A_{g_1}x_{g_1} + G_{g_1}d_{g_1} \ebm &\in \calS_1,\\[1mm]
        \label{eq:S1_equalities}
        \bbm C_{a_1} & 0 \\ 0 & C_1 \ebm \bbm x_{a_1} \\ x_1 \ebm &= \bbm C^u_{g_1} \\  C^y_{g_1} \ebm x_{g_1}.
    \end{align}
    By definition of $\calS$, there exists $x_{a_2}\in\X{a_2}$ such that $(x_{a_1}, x_1, x_{a_2}) \in \calR$. As $A_{a_1}x_{a_1} + G_{a_1}d_{a_1}\in\calV_{a_1}$, it follows from Proposition~\ref{prop:simulation_relation} that there exists $d_{a_2}\in\D{a_2}$ such that
    \begin{align}\label{eq:S12_invariance}
        \bbm A_{a_1}x_{a_1} + G_{a_1} d_{a_1} \\ B_1C_{a_1}x_{a_1} + A_1x_1 + G_1d_1 \\ A_{a_2}x_{a_2} + G_{a_2}d_{a_2} \ebm &\in\calR,\\[1mm]
        \label{eq:S12_equalities}
        \bbm 0 & C_1 \ebm \bbm x_{a_1} \\ x_1 \ebm &= C_{a_2}x_{a_2}.
    \end{align}
    Note that $\pi_{\X{a_2}}(\calR) \subset \calV_{a_2}$,
    hence $A_{a_2}x_{a_2} + G_{a_2}d_{a_2}\in \calV_{a_2}$. Consequently, since $(x_{a_2}, x_2, x_{g_2}) \in \calS_{2}$, it follows from Proposition~\ref{prop:simulation_relation} that there exists $d_{g_2}\in\D{g_2}$ such that
    \begin{align}
        \bbm A_{a_2}x_{a_2} + G_{a_2} d_{a_2} \\ B_2C_{a_2}x_{a_2} + A_2x_2 + G_2d_2 \\A_{g_2}x_{g_2} + G_{g_2}d_{g_2} \ebm &\in \calS_2\label{eq:S2_invariance} \\[1mm]
        \bbm C_{a_2} & 0 \\ 0 & C_2 \ebm \bbm x_{a_2} \\ x_2 \ebm &= \bbm C^u_{g_2} \\  C^y_{g_2} \ebm x_{g_2}.\label{eq:S2_equalities}
    \end{align}
    We have that $C_1x_1 = C_{a_2}x_{a_2}$ due to \eqref{eq:S12_equalities}, hence \eqref{eq:S2_invariance} yields
    \begin{equation}\label{eq:S2_invariance_modified}
        \bbm A_{a_2}x_{a_2} + G_{a_2} d_{a_2} \\ B_2C_1x_1 + A_2x_2 + G_2d_2 \\A_{g_2}x_{g_2} + G_{g_2}d_{g_2} \ebm \in \calS_2.
    \end{equation}
    Then \eqref{eq:S1_invariance}, \eqref{eq:S12_invariance} and \eqref{eq:S2_invariance_modified} imply that
    \begin{equation}\label{eq:S_invariance}
        \bbm A_{a_1}x_{a_1} + G_{a_1} d_{a_1} \\ B_1C_{a_1}x_{a_1} + A_1x_1 + G_1d_1 \\ B_2C_1x_1 + A_2x_2 + G_2d_2 \\ A_{g_1}x_{g_1} + G_{g_1}d_{g_1} \\ A_{g_2}x_{g_2} + G_{g_2}d_{g_2} \ebm \in \calS
    \end{equation}
    On the other hand, \eqref{eq:S1_equalities} and \eqref{eq:S2_equalities} imply that
    \begin{equation}\label{eq:S_equalities}
        \bbm C_{a_1} & 0 & 0 \\ 0 & 0 & C_2 \ebm \bbm x_{a_1} \\ x_1 \\ x_2 \ebm = \bbm C^u_{g_1} & 0 \\ 0 & C^{y}_{g_2} \ebm \bbm x_{g_1} \\ x_{g_1} \ebm,
    \end{equation}
    Note that $\pi_{\X{a_1}\times\X{1}\times\X{2}}(\calS) = \calV_{a_1}\times\X{1}\times\X{2}$ because $\calS_1$, $\calR$ and $\calS_2$ are full simulation relations. Therefore, we only need to show that $\pi_{\X{g_1}\times\X{g_2}}(\calS) \subset \calV_{g_{1}\to g_2}$, where $\calV_{g_1\to g_2}$ is the consistent subspace of $\gar_1\to\gar_2$. To do this, note that $\pi_{\X{g_1}\times\X{g_2}}(\calS)\subset \calV_{g_1}\times\calV_{g_2}$ because $\pi_{\X{g_1}}(\calS_1) \subset \calV_{g_1}$ and $\pi_{\X{g_2}}(\calS_2) \subset \calV_{g_2}$. In particular, this means that
    \begin{equation}
        \pi_{\X{g_1}\times\X{g_2}}(\calS) \subset\ker\bbm H_1 & 0 \\ 0 & H_2 \ebm.
    \end{equation}
    On the other hand, \eqref{eq:S1_equalities}, \eqref{eq:S12_equalities} and \eqref{eq:S2_equalities} imply that
    \begin{equation}
        C^y_{g_1}x_{g_1} = C_1x_1 = C_{a_2}x_{a_2} = C^u_{g_2}x_{g_2},
    \end{equation}
    for all $(x_{g_1},x_{g_2})\in \pi_{\X{g_1}\times\X{g_2}}(\calS)$. This shows that
    \begin{equation}
        \pi_{\X{g_1}\times\X{g_2}}(\calS) \subset \ker \bbm H_1 & 0 \\ 0 & H_2 \\ C^y_{g_1} & -C^u_{g_2} \ebm
    \end{equation}
    and thus $\pi_{\X{g_1}\times\X{g_2}}(\calS) = \calV_{g_{1}\to g_2}$ due to Remark~\ref{rem:simulation_relation}. Therefore, due to Proposition~\ref{prop:simulation_relation}, \eqref{eq:S_invariance} and \eqref{eq:S_equalities}, it follows that $\calS$ is indeed a full simulation relation of $\ass_1\meet(\sys_1\to\sys_2)$ by $\gar_1\to\gar_2$ and thus the third condition is also satisfied.\endproof
    
    \bibliographystyle{ieeetr}
    \bibliography{../../../references/all}
\end{document}